\newcommand{\R}{\mathbb{R}}
\newcommand{\N}{\mathbb{N}}
\newcommand{\ur}[1]{\mathrm{#1}}
\newcommand{\ure}{\ur e}
  \renewcommand{\labelenumi}{(\roman{enumi})}
\newcommand{\gt}{>}
\newcommand{\lt}{<}
\newcommand{\ra}{\rightarrow}
\newcommand{\nea}{\nearrow}
\newcommand{\ol}{\overline}
\newcommand{\ds}{\,\mathrm{d}s}
\newcommand{\ddt}{\frac{\mathrm{d}}{\mathrm{d}t}}
\newcommand{\hp}{\hphantom}
\newcommand{\pe}{\mathrel{\hp{=}}}
\newcommand{\tmax}{T_{\max}}
\newcommand{\intom}{\int_\Omega}
\newcommand{\ombar}{\ol \Omega}
\let\originalparagraph\paragraph
\renewcommand{\paragraph}[2][.]{\originalparagraph{#2#1}}
\newtheorem{base}{Base}[section]
\numberwithin{equation}{section}
\theoremstyle{plain}
\newtheorem{theorem}[base]{Theorem}
\newtheorem{lemma}[base]{Lemma}
\theoremstyle{definition}
\newtheorem{remark}[base]{Remark}
\begin{document}
\title{Boundedness enforced by mildly saturated conversion in a chemotaxis-May-Nowak model for virus infection}
\author{
Mario Fuest\footnote{fuestm@math.uni-paderborn.de}\\
{\small Institut f\"ur Mathematik, Universit\"at Paderborn,}\\
{\small 33098 Paderborn, Germany}
}
\date{}

\maketitle

\begin{abstract}
  \noindent
  We study the system
  \begin{align*} \label{prob:star} \tag{$\star$}
    \begin{cases}
      u_t = \Delta u - \nabla \cdot (u \nabla v) - u - f(u) w + \kappa, \\
      v_t = \Delta v - v + f(u) w, \\
      w_t = \Delta w - w + v,
    \end{cases}
  \end{align*}
  which models the virus dynamics in an early stage of an HIV infection,
  in a smooth, bounded domain $\Omega \subset \R^n, n \in \N,$ for a parameter $\kappa \ge 0$
  and a given function $f \in C^1([0, \infty))$ satisfying $f \ge 0$, $f(0) = 0$
  and $f(s) \le K_f s^\alpha$ for all $s \ge 1$, some $K_f \gt 0$ and $\alpha \in \R$.\\[5pt]
  We prove that whenever
  \begin{align*}
    \alpha \lt \frac2n,
  \end{align*}
  solutions to \eqref{prob:star} exist globally and are bounded.
  The proof mainly relies on smoothing estimates for the Neumann heat semigroup
  and (in the case $\alpha \gt 1$) on a functional inequality.\\[5pt]
  Furthermore, we provide some indication why the exponent $\frac2n$ could be essentially optimal.\\[5pt]
  \textbf{Keywords:} {boundedness, chemotaxis, May-Nowak model, virus dynamics}\\
  \textbf{AMS Classification (2010):} {35A01, 35B45, 35K57, 35Q92, 92C17}
\end{abstract}

\section{Introduction}
\paragraph{The model} The system
\begin{align} \label{prob:may_nowak}
  \begin{cases}
    u' = - u - uw + \kappa, \\
    v' = - v + uw, \\
    w' = - w + v.
  \end{cases}
\end{align}
can be used to describe the dynamics of a virus infection.
Here $u$ and $v$ model healthy and infected cells, respectively,
and $w$ denotes the concentration of virus particles.
All three populations undergo spontaneous decay, healthy cells are produced by a fixed rate $\kappa \ge 0$,
on contact with virus particles healthy cells are converted to infected ones and the virus is produced by infected cells.

This system (or similar ones including more parameters) has been motivated
and analyzed from a biological
\cite{BonhoefferEtAlVirusDynamicsDrug1997,
NowakBanghamPopulationDynamicsImmune1996,
NowakMayVirusDynamicsMathematical2000,
PerelsonEtAlHIV1DynamicsVivo1996,
WeiEtAlViralDynamicsHuman1995}
as well as a mathematical point of view \cite{CamposEtAlEffectsDistributedLife2008, JonesRoemerAnalysisSimulationThreecomponent2014}.
Questions of large time behavior are essentially answered,
where (in absence of other parameters) the sign of $\kappa - 1$
has been detected to play an important role \cite{KorobeinikovGlobalPropertiesBasic2004}.

This model is therefore indeed able to give insights into virus dynamics;
however, an ODE system can by its nature never capture nontrivial spatial effects such as pattern formation.
Hence, in \cite{StancevicEtAlTuringPatternsDynamics2013} the model
\begin{align} \label{prob:p_orig}
  \begin{cases}
    u_t = \Delta u - \nabla \cdot (u \nabla v) - u - uw + \kappa, \\
    v_t = \Delta v - v + uw, \\
    w_t = \Delta w - w + v
  \end{cases}
\end{align}
has been suggested.
In addition to \eqref{prob:may_nowak} the populations are now assumed to move around randomly; that is, they diffuse.
Furthermore, healthy cells are attracted by high concentrations of infected cells (which is a realistic assumption for HIV infections),
modeled by the cross-diffusion term $-\nabla \cdot (u \nabla v)$.
Systems containing such a term have been introduced by Keller and Segel in their seminal work \cite{KellerSegelTravelingBandsChemotactic1971} in order to model the behavior of E.\ coli bacteria
and systems including cross-diffusion have been of great interest both to biologists and mathematicians in the past decades,
see for instance \cite{BellomoEtAlMathematicalTheoryKeller2015} for a survey.

In \cite[Section 8]{StancevicEtAlTuringPatternsDynamics2013} it has been argued
that finite-time blow-up is not realistic in this setting.
However, such a phenomenon has been proven to occur for the classical Keller--Segel model.
While solutions are global and bounded if $n = 1$ \cite{OsakiYagiFiniteDimensionalAttractor2001},
blow-up does occur for some initial data in the spatially two-dimensional case 
\cite{HorstmannWangBlowupChemotaxisModel2001, SenbaSuzukiParabolicSystemChemotaxis2001}
and for many initial data in the higher dimensional radial-symmetric setting \cite{WinklerFinitetimeBlowupHigherdimensional2013}.
Therefore one should slightly alter the system in order to prevent blow-up.
One possible modification is to weaken (or in general to alter) the cross-diffusion term;
that is, to replace the first equation in \eqref{prob:p_orig} with
\begin{align*}
  u_t = \Delta u - \nabla \cdot (uf(u) \nabla v) - u - uw + \kappa.
\end{align*}
Here the cases $f \equiv \chi \in \R$, $|\chi|$ small \cite{BellomoEtAlChemotaxisModelVirus2018}
or $|f(s)| \le K_f (1 + s)^{-\alpha}$ for all $s \ge 0$, a constant $K_f \gt 0$ and $\alpha \in \R$ sufficiently large
\cite{HuLankeitBoundednessSolutionsVirus2018, LiTaoBoundednessEnforcedMild2018, WinklerBoundednessChemotaxisMayNowakModel2018}
have been analyzed.

In the present work we alter the conversion term.
To be precise, we study
\begin{align*} \label{prob:p} \tag{\text{P}}
  \begin{cases}
    u_t = \Delta u - \nabla \cdot (u \nabla v) - u - f(u) w + \kappa,  & \text{in $\Omega \times (0, T)$}, \\
    v_t = \Delta v - v + f(u) w,                                       & \text{in $\Omega \times (0, T)$}, \\
    w_t = \Delta w - w + v,                                            & \text{in $\Omega \times (0, T)$}, \\
    \partial_\nu u = \partial_\nu v = \partial_\nu w = 0,              & \text{on $\partial \Omega \times (0, T)$}, \\
    u(\cdot, 0) = u_0, v(\cdot, 0) = v_0, w(\cdot, 0) = w_0,           & \text{in $\Omega$}
  \end{cases}
\end{align*}
for $T \in (0, \infty]$, a smooth, bounded domain $\Omega \subset \R^n, n \in \N$, a parameter $\kappa \ge 0$,
given sufficiently smooth initial data $u_0, v_0, w_0$ and a given function $f \in C^1([0, \infty))$, which satisfies
\begin{align} \label{eq:main:f}
  f \ge 0, \quad f(0) = 0
  \quad \text{and} \quad
  f(s) \le K_f s^\alpha
  \text{ for all $s \ge 1$}
\end{align}
for some $K_f \gt 0$ and $\alpha \in \R$.
For instance, $f: [0, \infty) \ra \R, s \mapsto \frac{s}{1+s^{1-\alpha}}, K_f = 1$
and $f: [0, \infty) \ra \R$, $s \mapsto s^\alpha, K_f = 1$ may serve as prototypes
for the cases $\alpha \le 1$ and $\alpha \gt 1$, respectively.

The function $f$ can be seen as an interpolation between the classical Lotka-Volterra term ($f = \mathrm{id}$, that is $\alpha = 1$)
and other conversion terms, such as Holling's Type II and III or Beddington-DeAngelis responses (all $\alpha = 0$);
see for instance \cite{SkalskiGilliamFunctionalResponsesPredator2001} for definitions and a comparison of functional responses.

The case $\alpha = 0$ has been partly studied (among other modifications of \eqref{prob:p_orig})
in \cite{BellomoTaoStabilizationChemotaxisModel2017}.
The authors proved global existence and showed for the case $\kappa \le 1$ convergence towards stationary steady states.

A natural question is whether similar results can be obtained for $\alpha \gt 0$.
We show that this is indeed the case, at least regarding global existence and boundedness;
our main result is as follows:
\begin{theorem} \label{th:main}
  Let $\Omega \subset \R^n, n \in \N,$ be a smooth bounded domain and $\kappa \ge 0$.
  Suppose $f \in C^1([0, \infty))$ satisfies \eqref{eq:main:f} for some $K_f \gt 0$ and
  \begin{align} \label{eq:main:alpha}
    \alpha \lt \frac{2}{n}.
  \end{align}
  Then for all nonnegative initial data
  \begin{align} \label{eq:main:initial_data}
    u_0 \in C^0(\ombar), \quad
    v_0 \in W^{1, \infty}(\Omega)
    \quad \text{and} \quad
    w_0 \in C^0(\ombar)
  \end{align}
  the system \eqref{prob:p} possesses a nonnegative global classical solution $(u, v, w)$ satisfying
  \begin{align*}
    u &\in C^0(\ombar \times [0, \infty)) \cap C^{2, 1}(\ombar \times (0, \infty)), \\
    v &\in \bigcap_{q \gt n} C^0([0, \infty); W^{1, q}(\Omega)) \cap C^{2, 1}(\ombar \times (0, \infty)) \quad \text{and} \\
    w &\in C^0(\ombar \times [0, \infty)) \cap C^{2, 1}(\ombar \times (0, \infty))
  \end{align*}
  and being uniquely determined by these inclusions.
  
  Additionally, the solution is bounded:
  There exists $C \gt 0$ such that
  \begin{align*}
      \|u(\cdot, t)\|_{L^\infty(\Omega)}
    + \|v(\cdot, t)\|_{W^{1, \infty}(\Omega)}
    + \|w(\cdot, t)\|_{L^{\infty}(\Omega)}
    \lt C
    \quad \text{for all $t \gt 0$}.
  \end{align*}
\end{theorem}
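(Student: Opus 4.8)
The plan is to reduce the theorem to the derivation of suitable a priori bounds and then to obtain these through a bootstrap argument combining smoothing estimates for the Neumann heat semigroup with an energy inequality for $u$. First, by standard fixed point and parabolic regularity arguments there is a unique maximal classical solution $(u,v,w)$ on a maximal interval $[0,\tmax)$ with the regularity listed in the theorem, and this solution is global (i.e.\ $\tmax = \infty$) and bounded as soon as one shows that
\begin{align*}
  \sup_{t \in (0,\tmax)}\bigl(\|u(\cdot,t)\|_{L^\infty(\Omega)} + \|v(\cdot,t)\|_{W^{1,\infty}(\Omega)} + \|w(\cdot,t)\|_{L^\infty(\Omega)}\bigr) \lt \infty.
\end{align*}
Since $f(0)=0$, $f \ge 0$ and the source terms in the $v$- and $w$-equations are nonnegative, a comparison argument yields $u,v,w \ge 0$. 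Testing the first equation against $1$ and using $f \ge 0$, $w \ge 0$ gives $\ddt \intom u \le \kappa|\Omega| - \intom u$; adding the integrated first and second equations produces $\ddt \intom (u+v) = \kappa|\Omega| - \intom (u+v)$, and the integrated third equation yields $\ddt \intom w = \intom v - \intom w$. Solving these elementary ODE comparisons shows that $\|u(\cdot,t)\|_{L^1(\Omega)}$, $\|v(\cdot,t)\|_{L^1(\Omega)}$ and $\|w(\cdot,t)\|_{L^1(\Omega)}$ are bounded uniformly in $t$, which is the starting point of the iteration.

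The core of the argument is a coupled bootstrap in the integrability exponents. Suppose that, for some $p \ge 1$, the quantity $\|f(u(\cdot,t))w(\cdot,t)\|_{L^p(\Omega)}$ is bounded uniformly in $t$. Representing $v$ through the variation-of-constants formula and invoking the smoothing estimate $\|\nabla e^{\sigma(\Delta - 1)}z\|_{L^q} \le C(1+\sigma^{-\frac12 - \frac n2(\frac1p - \frac1q)})e^{-\sigma}\|z\|_{L^p}$, the resulting $t$-integral converges whenever $\frac1p - \frac1q \lt \frac1n$, so that $\sup_t \|\nabla v(\cdot,t)\|_{L^q}$ is finite for a range of exponents $q$ strictly larger than $p$; feeding the accompanying $L^q$-bound for $v$ into the third equation and using the zeroth-order analogue likewise bounds $\sup_t\|w(\cdot,t)\|_{L^q}$. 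To turn these gradient bounds on $v$ into higher integrability of $u$, I would test the first equation with $u^{r-1}$; after integrating by parts and applying Young's inequality to the cross-diffusion contribution $\tfrac{2(r-1)}{r}\intom u^{r/2}\nabla u^{r/2}\cdot\nabla v$, and discarding the dissipative reaction term $-\intom u^{r-1}f(u)w \le 0$, this produces
\begin{align*}
  \frac1r\ddt \|u\|_{L^r}^r + c\,\|\nabla u^{r/2}\|_{L^2}^2 + \|u\|_{L^r}^r \le C\intom u^r|\nabla v|^2 + \kappa\intom u^{r-1}.
\end{align*}
Estimating $\intom u^r |\nabla v|^2$ by Hölder's inequality against $\sup_t\|\nabla v\|_{L^q}$ and interpolating the resulting norm of $u^{r/2}$ between the Sobolev control afforded by $\|\nabla u^{r/2}\|_{L^2}$ and the $L^1$-bound for $u$ via the Gagliardo--Nirenberg inequality, the right-hand side can be absorbed into the dissipation precisely when the associated interpolation exponent $\theta$ stays strictly below $1$; for a suitable choice of the auxiliary exponents $q$ and $r$ this subcriticality holds exactly in the range $\alpha \lt \frac2n$, after which an ODE comparison gives $\sup_t\|u(\cdot,t)\|_{L^r} \lt \infty$. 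Since $f(u) \le K_f u^\alpha + \max_{[0,1]}f$, the improved bounds on $u$ and $w$ raise the admissible exponent $p$ for $f(u)w$, and iterating the two steps finitely often drives the integrability into the regime $p \gt n$.

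Because $\alpha \gt 1$ is compatible with \eqref{eq:main:alpha} only when $n = 1$, the genuinely superlinear case is one-dimensional; there the crude Hölder/Gagliardo--Nirenberg splitting above is too lossy, and I would instead close the estimate by means of a dedicated functional inequality (exploiting the strong one-dimensional embedding $W^{1,2}(\Omega)\embed L^\infty(\Omega)$) that controls the coupling term directly in terms of $\|\nabla u^{r/2}\|_{L^2}^2$, a norm of $w$, and the conserved $L^1$-mass. Once $f(u)w$ is bounded in $L^p$ with $p \gt n$, the smoothing estimates yield $\sup_t\|\nabla v(\cdot,t)\|_{L^\infty} \lt \infty$; feeding this into a Moser-type iteration (or a further semigroup argument) for the first equation gives $\sup_t\|u(\cdot,t)\|_{L^\infty} \lt \infty$, whence $\sup_t\|w\|_{L^\infty}$ and $\sup_t\|v\|_{W^{1,\infty}}$ follow from the corresponding equations, completing the desired bound. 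I expect the main obstacle to be the bookkeeping of the coupled bootstrap — tracking how the three exponents for $u$, $\nabla v$ and $w$ evolve and verifying that $\alpha \lt \frac2n$ is exactly the threshold at which every absorption step retains a positive margin — together with, in the one-dimensional superlinear regime, the construction of the functional inequality that replaces the failing interpolation.
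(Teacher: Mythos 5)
Your architecture (semigroup smoothing for $v$ and $w$, testing the $u$-equation, special handling of $n=1$, $\alpha>1$) is reasonable, and an iterative version of it can be made to work, but as written there are three genuine gaps. First, the bootstrap is never initialized: the uniform $L^1$ bounds on $u$, $v$, $w$ \emph{separately} do not control the product, so the hypothesis ``$\|f(u(\cdot,t))w(\cdot,t)\|_{L^p(\Omega)}$ is bounded for some $p\ge 1$'' does not follow from your starting point. To launch the iteration one must first improve $w$ alone (from $\sup_t\|v\|_{L^1}<\infty$, smoothing gives $\sup_t\|w\|_{L^q}<\infty$ for every $q<\tfrac{n}{(n-2)_+}$, and $q=\infty$ if $n=1$) and pair this with $\sup_t\|f(u)\|_{L^{1/\alpha_+}}<\infty$, which follows from the mass bound and \eqref{eq:main:f}; Hölder then yields $f(u)w\in L^{p_0}$ for some $p_0\ge1$ precisely when $\alpha_+ +\tfrac{(n-2)_+}{n}<1$, i.e.\ precisely when $\alpha<\tfrac2n$. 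This is where \eqref{eq:main:alpha} actually enters --- not, as you claim, in the absorption step of the $u^{r-1}$ testing argument: there the Gagliardo-Nirenberg exponent satisfies $\theta<1$ if and only if $q>n$, independently of $\alpha$ and of $r$. This misplacement is also structural: the testing step cannot run ``inside'' your loop at all, since it requires $\sup_t\|\nabla v\|_{L^q}<\infty$ for some $q>n$, which is exactly the terminal state of the iteration. The loop that actually gains repeats only the two semigroup steps ($f(u)w\in L^{p_k}\Rightarrow v\in L^{q_k'}\Rightarrow w\in L^{q_k''}$) against the \emph{fixed} bound $f(u)\in L^{1/\alpha_+}$, improving $\tfrac1{p_k}$ by roughly $\tfrac4n-\alpha_+>\tfrac2n$ per cycle until $p_k>\tfrac n2$; only then does $q>n$ become admissible and the testing (or a direct semigroup estimate) for $u$ becomes available.

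Second, for $n=1$ and $\alpha\in(1,2)$ --- the only regime where $\alpha>1$ is compatible with \eqref{eq:main:alpha}, and where $f(u)$ is \emph{not} bounded in any $L^p$, $p\ge1$, by the mass bound --- your proposal reduces to announcing a ``dedicated functional inequality'' that you never construct. This is a substantive part of the proof, not a routine fix: the paper resolves it (Lemma~\ref{lm:n1_bdd_vx}) by showing that the coupled functional $\tfrac1\alpha\intom u^\alpha+\tfrac12\intom v_x^2$ obeys a linear differential inequality, obtained by testing the first equation with $u^{\alpha-1}$, the second with $-v_{xx}$, absorbing the mixed terms into $\intom v_{xx}^2$, and controlling the resulting $\intom u^{2\alpha}$ via the one-dimensional Gagliardo-Nirenberg inequality (which is where $\alpha<2$ is needed); note that $\intom v_x^2$ must be part of the functional, which your sketch omits. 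Finally, it is worth noting that the paper avoids any bootstrap: it proves $\|w\|_{L^\infty}\le C(1+\sup_s\|\nabla v\|_{L^q}^b)$ with $b\in(0,1)$ (Lemma~\ref{lm:bdd_w_l_infty}) and $\|\nabla v\|_{L^q}\le C(1+\sup_s\|w\|_{L^\infty})$ for $q\in[2,\tfrac{n}{(\alpha n-1)_+})$ (Lemma~\ref{lm:bdd_nabla_v_l_q}), and the strict sublinearity $b<1$ closes the coupled system in a single shot, the condition $\alpha<\tfrac2n$ entering only to guarantee $\tfrac{n}{(\alpha n-1)_+}>n$. Your cycling route is a legitimate (if longer) alternative for $\alpha\le1$, but the missing base case, the misplaced role of $\alpha<\tfrac2n$, and the unproved one-dimensional lemma are real gaps that must be filled.
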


\begin{remark}
  The absence of parameters in \eqref{prob:p} is purely to simplify the notation and to maintain readability.
  Using the same methods as below one could achieve the same result for the system
  \begin{align*}
    \begin{cases}
      u_t = D_1 \Delta u - \chi \nabla \cdot (u \nabla v) - d_1 u - f(u) w + \kappa, \\
      v_t = D_2 \Delta v - d_2 v + f(u) w, \\
      w_t = D_3 \Delta w - d_3 w + r v
    \end{cases}
  \end{align*}
  (with corresponding initial and boundary conditions),
  wherein the parameters $D_1, D_2, D_3, \chi, d_1, d_2, d_3, r \gt 0$ and $\kappa \ge 0$ are given. 
  In \cite[Section 9]{StancevicEtAlTuringPatternsDynamics2013} realistic values of these parameters are discussed.
\end{remark}

\paragraph{Optimality of the exponent}
A natural question is whether the condition~\eqref{eq:main:alpha} is optimal.
Comparing \eqref{prob:p} with
\begin{align} \label{prob:ks_parab_parab}
  \begin{cases}
    u_t = \Delta u - \nabla \cdot (u \nabla v), \\
    v_t = \Delta v - v + f(u)
  \end{cases}
\end{align}
and related systems indicates that this might indeed be the case:

The production term in the second equation in \eqref{prob:ks_parab_parab} is ``only'' $f(u)$
instead of the possibly larger $f(u) w$ in \eqref{prob:p},
hence one might expect that if finite-time blow-up is possible for \eqref{prob:ks_parab_parab}, then also for \eqref{prob:p}.

For \eqref{prob:ks_parab_parab} an analogous result as stated in Theorem~\ref{th:main} has been proved \cite{LiuTaoBoundednessChemotaxisSystem2016}.
On the other hand, if $n = 2$ and $f = \textrm{id}$ then blow-up in finite time may occur
\cite{HorstmannWangBlowupChemotaxisModel2001, SenbaSuzukiParabolicSystemChemotaxis2001}.
Although the question whether blow-up solutions to \eqref{prob:ks_parab_parab} exist for $n \ge 3$ and $\alpha \ge \frac{2}{n}$
is, to our knowledge, open,
at the very least the exponent $\frac{2}{n}$ is critical for a parabolic-elliptic variant of \eqref{prob:ks_parab_parab}:
In \cite{WinklerCriticalBlowupExponent2018} the system
\begin{align} \label{prob:ks_parab_ell_nonlin_prod}
  \begin{cases}
    u_t = \Delta u - \nabla \cdot (u \nabla v), \\
    0   = \Delta v - \frac{1}{|\Omega|} \intom f(u) + f(u),
  \end{cases}
\end{align}
is considered and the following is proved:
\begin{itemize}
  \item
    If $f \in C^1([0, \infty)$ satisfies $f \ge 0, f' \ge 0$,
    $f(s) \le K_f s^\alpha$ for all $s \ge 1$, some $K_f \gt 0$ and $\alpha \lt \frac{2}{n}$,
    solutions to \eqref{prob:ks_parab_ell_nonlin_prod} exist globally and are bounded,
    provided the initial datum $u_0$ is nonnegative and sufficiently regular.

  \item
    If $f \in C^1([0, \infty)$ satisfies $f \ge 0, f' \ge 0$,
    $f(s) \ge K_f s^\alpha$ for all $s \ge 1$, some $K_f \gt 0$ and $\alpha \gt \frac{2}{n}$ and $\Omega$ is a ball,
    then there exists $u_0$ (with arbitrary small mass)
    such that the solution to \eqref{prob:ks_parab_ell_nonlin_prod} blows up in finite time.
\end{itemize}

\paragraph{Plan of the paper}
After this introductory section we provide a statement on local existence of solutions to \eqref{prob:p},
along with an extensibility criterion (Lemma~\ref{lm:ex_crit})
and some basic properties of the solutions, such as nonnegativity and boundedness in $L^1$ (Lemma~\ref{lm:l1_bdd}).

If $\alpha \le 1$, known $L^p$-$L^q$~estimates
provide an $L^\infty$ bound for $w$ and an $L^q$ bound for $\nabla v$ for some $q \gt n$ (Lemma~\ref{lm:bdd_nabla_v_w}).
The main ingredient here is boundedness of $f(u)$ in $L^{1/\alpha_+}(\Omega)$,
provided by condition~\eqref{eq:main:f} and an $L^1$ bound for $u$.
Perhaps somewhat surprisingly and in contrast to for instance \cite{HuLankeitBoundednessSolutionsVirus2018}
and \cite{WinklerBoundednessChemotaxisMayNowakModel2018},
we do not need to cycle this argument
-- a single application per solution component is sufficient.

However, only relying on $L^p$-$L^q$ estimates does not seem to be fruitful for the case $\alpha \gt 1$.
In that case, which in view of \eqref{eq:main:alpha} only concerns us if $n = 1$,
in Lemma~\ref{lm:n1_bdd_vx} we prove boundedness of the functional $\frac{1}{\alpha} \intom u^\alpha + \frac12 \intom v_x^2$ instead.

Having in both cases obtained an $L^q$~bound for $\nabla v$ for some $q \gt n$
we are able to provide an $L^\infty$~bound for $u$ in Lemma~\ref{lm:bdd_u_l_infty},
again making use of $L^p$-$L^q$~estimates.
Combining these bounds finally allows us to conclude global existence of the solutions.

\section{Preliminaries}
We henceforth fix a smooth, bounded domain $\Omega \subset \R^n$, $n \in \N$, $\kappa \ge 0$
as well as $K_f \gt 0$, $\alpha \in \R$ and a function $f \in C^1([0, \infty))$ satisfying \eqref{eq:main:f}.
Furthermore, we also fix nonnegative initial data $u_0, v_0, w_0: \ombar \ra \R$ fulfilling \eqref{eq:main:initial_data}.

We begin by stating a result on local existence of solutions:
\begin{lemma} \label{lm:ex_crit}
  There exist $\tmax \in (0, \infty]$ and uniquely determined functions
  \begin{align*}
    u &\in C^0(\ombar \times [0, \infty)) \cap C^{2, 1}(\ombar \times (0, \tmax)), \\
    v &\in C^0([0, \infty); W^{1, \infty}(\Omega)) \cap C^{2, 1}(\ombar \times (0, \tmax)) \quad \text{and} \\
    w &\in C^0(\ombar \times [0, \infty)) \cap C^{2, 1}(\ombar \times (0, \tmax)),
  \end{align*}
  solving \eqref{prob:p} classically, and are such that if $\tmax \lt \infty$
  \begin{align*}
    \limsup_{t \nea \tmax}
        \|u(\cdot, t)\|_{L^\infty(\Omega)}
      + \|v(\cdot, t)\|_{W^{1, q}(\Omega)}
      + \|w(\cdot, t)\|_{L^\infty(\Omega)}
    = \infty
  \end{align*}
  holds for all $q \gt n$.
  Additionally, these functions are nonnegative.
\end{lemma}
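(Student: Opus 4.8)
The plan is to construct the solution by a contraction-mapping argument built on the variation-of-constants formula, and then to obtain the classical regularity, the extensibility criterion and the nonnegativity by separate standard techniques; throughout, $(\ure^{t\Delta})_{t \ge 0}$ denotes the Neumann heat semigroup on $\Omega$. Fixing some $q \gt n$ and $T \in (0, 1]$ to be determined, I would work in the Banach space $X_T := C^0(\ombar \times [0, T]) \times C^0([0, T]; W^{1, q}(\Omega)) \times C^0(\ombar \times [0, T])$ and in the closed bounded subset $S_T \subset X_T$ of triples whose norm is bounded by $R := 1 + \|u_0\|_{L^\infty(\Omega)} + \|v_0\|_{W^{1, q}(\Omega)} + \|w_0\|_{L^\infty(\Omega)}$, on which I define $\Phi = (\Phi_1, \Phi_2, \Phi_3)$ through
\begin{align*}
  \Phi_1(u, v, w)(t) &= \ure^{t(\Delta - 1)} u_0 - \int_0^t \ure^{(t-s)(\Delta - 1)} \nabla \cdot (u \nabla v) \ds + \int_0^t \ure^{(t-s)(\Delta - 1)} (\kappa - f(u) w) \ds, \\
  \Phi_2(u, v, w)(t) &= \ure^{t(\Delta - 1)} v_0 + \int_0^t \ure^{(t-s)(\Delta - 1)} f(u) w \ds, \\
  \Phi_3(u, v, w)(t) &= \ure^{t(\Delta - 1)} w_0 + \int_0^t \ure^{(t-s)(\Delta - 1)} v \ds,
\end{align*}
so that the fixed points of $\Phi$ are exactly the mild solutions of \eqref{prob:p} on $[0, T]$.

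The core of the argument is to verify that, for $T$ small, $\Phi$ maps $S_T$ into itself and acts as a contraction there. Here I would invoke the smoothing estimates for the Neumann heat semigroup, most importantly $\|\ure^{t\Delta} \nabla \cdot z\|_{L^\infty(\Omega)} \le C t^{-\frac12 - \frac{n}{2q}} \|z\|_{L^q(\Omega)}$ and $\|\nabla \ure^{t\Delta} z\|_{L^q(\Omega)} \le C t^{-\frac12} \|z\|_{L^q(\Omega)}$. For the cross-diffusion term I would bound $u \nabla v$ in $L^q(\Omega)$ via $\|u\|_{L^\infty(\Omega)} \|\nabla v\|_{L^q(\Omega)}$ and observe that the ensuing temporal singularity $t^{-\frac12 - \frac{n}{2q}}$ is integrable precisely because $q \gt n$; this is the decisive point and explains the appearance of $W^{1, q}$-regularity of $v$ with $q \gt n$. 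As $f \in C^1([0, \infty))$ is locally Lipschitz, the possibly superlinear growth permitted by \eqref{eq:main:f} is irrelevant on the bounded set $S_T$, so the reaction terms $\kappa - f(u) w$, $f(u) w$ and $v$ are estimated in the same spirit, with the differences needed for the contraction controlled by the local Lipschitz bound for $f$. Choosing $T$ sufficiently small thus renders $\Phi$ a self-map and a contraction on $S_T$, and Banach's fixed point theorem supplies a unique mild solution on $[0, T]$.

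Given this mild solution, I would upgrade its regularity by a bootstrap: treating each equation separately with its now-known right-hand side, parabolic $L^p$- and Schauder estimates successively yield Hölder continuity and finally the $C^{2, 1}(\ombar \times (0, \tmax))$ regularity, as well as the improvement $v \in C^0([0, \infty); W^{1, \infty}(\Omega))$; uniqueness in the asserted class is inherited from the contraction property. Extending the solution maximally defines $\tmax \in (0, \infty]$, and the extensibility criterion is the contrapositive of local solvability: were the displayed norm to remain bounded as $t \nea \tmax \lt \infty$, one could restart the fixed-point construction from data near $\tmax$ and prolong the solution, contradicting maximality.

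It remains to establish nonnegativity, which I would deduce from the quasi-positive structure of the system. Since $f \ge 0$ and $f(0) = 0$, on the relevant faces of the nonnegative octant each reaction term is nonnegative: at $u = 0$ one has $\kappa - f(u) w = \kappa \ge 0$, at $v = 0$ one has $f(u) w \ge 0$, and at $w = 0$ one has $v \ge 0$; writing the cross-diffusion contribution as $-\nabla u \cdot \nabla v - u \Delta v$, it too vanishes at an interior spatial minimum where $u = 0$. Hence the maximum principle applies simultaneously to the three components and propagates $u_0, v_0, w_0 \ge 0$ to all of $(0, \tmax)$. The main obstacle I anticipate lies not in the nonnegativity — which, apart from a careful simultaneous treatment of the mutually coupled pair $(v, w)$, is routine — but in the cross-diffusion term $\nabla \cdot (u \nabla v)$ within the fixed-point scheme: it is precisely this term that forces the $W^{1, q}$-framework with $q \gt n$ and that requires the above semigroup estimate, whose time singularity is integrable only thanks to $q \gt n$.
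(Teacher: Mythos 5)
Your proposal follows essentially the same route as the paper's proof, which simply cites the standard argument (Banach's fixed point theorem for mild solutions built on the variation-of-constants formula and Neumann heat semigroup estimates, as in Horstmann--Winkler, followed by a regularity upgrade to classical solutions, a restart argument for the extensibility criterion, and the maximum principle for nonnegativity); your write-up correctly fills in the details of exactly that scheme, including the key role of $q \gt n$ for integrability of the singularity $t^{-\frac12 - \frac{n}{2q}}$ and the quasi-positivity structure needed for the coupled pair $(v, w)$.
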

\begin{proof}
  Local existence and the extensibility criterion can be be proved by standard arguments,
  applied for instance in \cite[Theorem 3.1]{HorstmannWinklerBoundednessVsBlowup2005}.
  The main idea is to use Banach's fixed point argument to construct mild solutions
  and then show that these are in fact classical solutions.

  Nonnegativity of $u, v, w$ is a consequence of the maximum principle and the fact that $u_0, v_0, w_0 \ge 0$ and $\kappa \ge 0$.
\end{proof}

In the sequel we will always denote this solution of \eqref{prob:p} by $(u, v, w)$.

\begin{lemma} \label{lm:l1_bdd}
  There exists $C \gt 0$ such that
  \begin{align*}
    \|u(\cdot, t)\|_{L^1(\Omega)} + \|v(\cdot, t)\|_{L^1(\Omega)} \le C \quad \text{for all $t \in (0, \tmax)$}.
  \end{align*}
\end{lemma}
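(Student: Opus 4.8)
The plan is to integrate the equations of \eqref{prob:p} over $\Omega$ and to exploit the cancellation of the conversion term $f(u)w$ between the first two equations.

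First I would integrate the $u$-equation over $\Omega$. Since both $\Delta u$ and $\nabla \cdot (u \nabla v)$ are in divergence form, the homogeneous Neumann boundary conditions make their integrals vanish, leaving
\[
  \ddt \intom u \dx = -\intom u \dx - \intom f(u) w \dx + \kappa |\Omega|.
\]
The term $-\intom f(u) w \dx$ is nonpositive by the nonnegativity of $f$, $u$ and $w$ asserted in Lemma~\ref{lm:ex_crit}, but it cannot be controlled from below in terms of $L^1$ data alone, so testing the $u$-equation in isolation does not obviously close the estimate for $v$. This is the main difficulty, and the observation resolving it is that adding the first two equations cancels the conversion terms.

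Indeed, setting $z := u + v$ we obtain
\[
  z_t = \Delta z - \nabla \cdot (u \nabla v) - z + \kappa,
\]
in which the contributions $-f(u)w$ and $+f(u)w$ have cancelled. Integrating over $\Omega$ and again discarding the divergence terms via the Neumann conditions yields the closed scalar differential equation
\[
  \ddt \intom z \dx = -\intom z \dx + \kappa |\Omega|.
\]

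Finally, a standard ODE comparison argument applied to $y(t) := \intom z(\cdot, t) \dx$, which solves $y' = -y + \kappa |\Omega|$, gives $y(t) \le \max\{y(0), \kappa |\Omega|\} = \max\{\intom (u_0 + v_0) \dx, \kappa |\Omega|\}$ for all $t \in (0, \tmax)$. Since $u, v \ge 0$ we have $\intom u \dx \le \intom z \dx$ and $\intom v \dx \le \intom z \dx$, so both $\|u(\cdot, t)\|_{L^1(\Omega)}$ and $\|v(\cdot, t)\|_{L^1(\Omega)}$ are bounded by this constant, which proves the claim. The only points requiring a little care are the justification of the integration by parts (valid since the solution is classical and smooth up to the boundary on $(0, \tmax)$) and the passage to the limit $t \searrow 0$ in the comparison argument, which is handled by continuity of $t \mapsto \intom z(\cdot, t) \dx$ up to $t = 0$.
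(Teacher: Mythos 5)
Your proof is correct and follows essentially the same route as the paper: summing the first two equations so that the $\pm f(u)w$ terms cancel, integrating over $\Omega$ with the Neumann conditions, and applying an ODE comparison to $\intom (u+v)$ before invoking nonnegativity. The paper's version differs only cosmetically (it works directly with the integrated quantity $z(t) = \intom u + \intom v$ rather than the pointwise sum $u+v$).
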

\begin{proof}
  The function $z(t) := \intom u(\cdot, t) + \intom v(\cdot, t)$, $t \in (0, \tmax)$, fulfills
  \begin{align*}
        z'
    =   - \intom u - \intom \frac{uw}{(1+u)^\beta} + \intom \kappa
        - \intom v + \intom \frac{uw}{(1+u)^\beta}
    =   -z + \kappa |\Omega|
  \end{align*}
  in $(0, \tmax)$,
  hence $z \le \max\{\intom u_0, \kappa |\Omega|\}$ in $(0, \tmax)$ by an ODE comparison argument.
  As $u, v \ge 0$ by Lemma~\ref{lm:ex_crit}, this already implies the statement.
\end{proof}

\section{$L^q$ bound for $\nabla v$ and $L^\infty$ bound for $w$}
As it will turn out, we need to distinguish the cases $\alpha \le 1$ and $\alpha \gt 1$ in order to prove bounds for $\nabla v$ and $w$.

\subsection{Case $\alpha \le 1$}
In order to prove boundedness we first establish relationships
between $L^\infty$ bounds for $w$ and $L^q$ bounds for $\nabla v$ (for appropriate $q \ge 1$).

\begin{lemma} \label{lm:bdd_w_l_infty}
  For all $q \gt \max\{1, \frac{n}{2}\}$ there exist $C \gt 0$ and $b \in (0, 1)$ such that
  \begin{align*}
        \|w(\cdot, t)\|_{L^\infty(\Omega)}
    \le C \left( 1 + \sup_{s \in (0, t)} \|\nabla v(\cdot, s)\|_{L^q(\Omega)}^b \right)
  \end{align*}
  holds for all $t \in (0, T_{\max})$.
\end{lemma}
\begin{proof}
  As
  \begin{align*}
    b := \frac{n - \frac{n}{q}}{n + 1 - \frac{n}{q}} \in (0, 1),
  \end{align*}
  we may invoke the Gagliardo-Nirenberg inequality to obtain $c_1 \gt 0$ such that
  \begin{align*}
        \|\varphi\|_{L^q(\Omega)}
    \le c_1 \|\nabla \varphi\|_{L^q(\Omega)}^b \|\varphi\|_{L^1(\Omega)}^{1-b} + c_1 \|\varphi\|_{L^1(\Omega)}
    \quad \text{for all $\varphi \in W^{1, q}(\Omega)$}.
  \end{align*}

  Moreover, known smoothing estimates for the Neumann Laplace semigroup (cf.\ \cite[Lemma~1.3~(i)]{WinklerAggregationVsGlobal2010})
  provide $c_2 \gt 0$ such that
  \begin{align*}
        \|\ure^{\sigma \Delta} \varphi\|_{L^\infty(\Omega)}
    \le c_2 \left(1 + \sigma^{-\frac{n}{2q}} \right) \|\varphi\|_{L^q(\Omega)}
    \quad \text{for all $\sigma \gt 0$ and all $\varphi \in L^q(\Omega)$}.
  \end{align*}

  As there also exists $c_3 \gt 0$ such that $\|v(\cdot, t)\|_{L^1(\Omega)} \le c_3$
  for all $t \in (0, \tmax)$ by Lemma~\ref{lm:l1_bdd},
  a combination of these estimates and an application of the maximum principle yields
  \begin{align*}
    &\pe  \|w(\cdot, t)\|_{L^\infty(\Omega)} \\
    &\le  \|\ure^{t (\Delta - 1)} w_0\|_{L^\infty(\Omega)}
          + \int_0^t \|\ure^{(t-s) (\Delta -1)} v(\cdot, s)\|_{L^\infty(\Omega)} \ds \\
    &\le  \ure^{-t} \|w_0\|_{L^\infty(\Omega)}
          + c_2 \int_0^t \left( 1 + (t-s)^{-\frac{n}{2q}} \right) \ure^{-(t-s)} \|v(\cdot, s)\|_{L^q(\Omega)} \ds \\
    &\le  \|w_0\|_{L^\infty(\Omega)}
          + c_1 c_2 \int_0^t \left( 1 + (t-s)^{-\frac{n}{2q}} \right) \ure^{-(t-s)}
            \left( \|\nabla v(\cdot, s)\|_{L^q(\Omega)}^b \|v(\cdot, s)\|_{L^1(\Omega)}^{1-b} + \|v(\cdot, s)\|_{L^1(\Omega)} \right) \ds \\
    &\le  \|w_0\|_{L^\infty(\Omega)}
          + c_1 c_2 c_4
            \left( \sup_{s \in (0, t)} \|\nabla v(\cdot, s)\|_{L^q(\Omega)}^a c_3^{1-b} + c_3 \right)
  \end{align*}
  for all $t \in (0, \tmax)$, where
  \begin{align*}
    c_4 := \int_0^\infty \left( 1 + s^{-\frac{n}{2q}} \right) \ure^{-s} \ds
  \end{align*}
  is finite because of $q \gt \frac{n}{2}$.
\end{proof}
 
\begin{lemma} \label{lm:bdd_nabla_v_l_q}
  Suppose $\alpha \le 1$ and $\frac{n}{(\alpha n - 1)_+} \gt 2$.
  Let
  \begin{align} \label{eq:bdd_nabla_v_l_q:ass_q}
    q \in \left[2, \frac{n}{(\alpha n - 1)_+} \right).
  \end{align}
  Then we can find $C \gt 0$ such that
  \begin{align*}
        \|\nabla v(\cdot, t)\|_{L^q(\Omega)}
    \le C \left(1 + \sup_{s \in (0, t)} \|w(\cdot, s)\|_{L^\infty(\Omega)} \right)
  \end{align*}
  holds for all $t \in (0, T_{\max})$.
\end{lemma}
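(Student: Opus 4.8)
The goal is to bound $\|\nabla v(\cdot, t)\|_{L^q(\Omega)}$ in terms of $\sup_{s} \|w(\cdot, s)\|_{L^\infty}$. The natural strategy is to represent $v$ via the variation-of-constants formula for the second equation in \eqref{prob:p}, namely $v(\cdot, t) = e^{t(\Delta - 1)} v_0 + \int_0^t e^{(t-s)(\Delta - 1)} f(u(\cdot, s)) w(\cdot, s) \ds$, and then apply the gradient to this expression. The key smoothing estimate needed is the bound on $\|\nabla e^{\sigma \Delta} \varphi\|_{L^q(\Omega)}$ in terms of $\|\varphi\|_{L^p(\Omega)}$ for appropriate $p \le q$, again taken from \cite[Lemma~1.3]{WinklerAggregationVsGlobal2010}. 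Applied to $\nabla v$, this should yield a bound involving $\int_0^t (1 + (t-s)^{-\frac{1}{2} - \frac{n}{2}(\frac{1}{p} - \frac{1}{q})}) e^{-(t-s)} \|f(u(\cdot, s)) w(\cdot, s)\|_{L^p(\Omega)} \ds$.

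First I would control the source term $f(u) w$. Pulling $w$ out in $L^\infty$, we get $\|f(u) w\|_{L^p} \le \|w\|_{L^\infty} \|f(u)\|_{L^p}$. By condition~\eqref{eq:main:f}, $f(s) \le K_f s^\alpha$ for $s \ge 1$ and $f$ is bounded on $[0,1]$, so $\|f(u)\|_{L^p}$ is essentially controlled by $\|u^\alpha\|_{L^p} = \|u\|_{L^{\alpha p}}^\alpha$ (when $\alpha > 0$; when $\alpha \le 0$, $f(u)$ is outright bounded). The crucial point is that Lemma~\ref{lm:l1_bdd} gives a uniform $L^1$ bound on $u$, so I want to choose $p$ so that $\alpha p \le 1$, i.e.\ $p \le \frac{1}{\alpha}$ in the regime $\alpha > 0$ — this is exactly where the restriction on $q$ (and the appearance of $(\alpha n - 1)_+$) must enter. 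Choosing $p = \max\{1, \frac{1}{\alpha}\}$ or nearby makes $\|f(u)\|_{L^p} \le c(1 + \|u\|_{L^1}^\alpha)$ uniformly bounded, reducing the whole integrand to $\|w\|_{L^\infty}$ times a bounded factor.

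The remaining obstacle — and the step I expect to be most delicate — is verifying that the time integral $\int_0^\infty (1 + \sigma^{-\frac{1}{2} - \frac{n}{2}(\frac{1}{p} - \frac{1}{q})}) e^{-\sigma} \ds$ converges, which requires the exponent $\frac{1}{2} + \frac{n}{2}(\frac{1}{p} - \frac{1}{q}) < 1$. This inequality, combined with the constraint $p \le \frac{1}{\alpha}$ forced by the $L^1$ bound on $u$, is precisely what translates into the admissible range $q < \frac{n}{(\alpha n - 1)_+}$ from \eqref{eq:bdd_nabla_v_l_q:ass_q}; checking that the stated range of $q$ is nonempty (the hypothesis $\frac{n}{(\alpha n - 1)_+} > 2$) and that one can indeed pick a matching $p$ satisfying both the integrability condition and $\alpha p \le 1$ is the heart of the argument. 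Once the time integral is finite, say equal to some $c_5 < \infty$, pulling $\sup_{s \in (0,t)} \|w(\cdot, s)\|_{L^\infty}$ out of the integral and collecting the boundary term $\|\nabla e^{t(\Delta - 1)} v_0\|_{L^q} \le \|\nabla v_0\|_{L^q} \le \|\nabla v_0\|_{L^\infty} |\Omega|^{1/q}$ (finite since $v_0 \in W^{1,\infty}(\Omega)$) yields the claimed estimate.
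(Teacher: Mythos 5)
Your overall strategy coincides with the paper's: represent $v$ by the variation-of-constants formula, apply the gradient smoothing estimates of \cite[Lemma~1.3]{WinklerAggregationVsGlobal2010} to the source term $f(u)w$, pull $w$ out in $L^\infty$, and control $f(u)$ in a suitable $L^p$ norm via the $L^1$ bound of Lemma~\ref{lm:l1_bdd}. However, there are two genuine gaps. First, your explicit choice $p = \max\{1, \frac{1}{\alpha}\}$ is wrong at the edges of the parameter range, and in one case the resulting step fails outright: for $\alpha \le 0$ your formula gives $p = 1$, and then the singularity exponent is $\frac12 + \frac{n}{2}(1 - \frac1q) \ge \frac12 + \frac{n}{4} \ge 1$ whenever $q \ge 2$ and $n \ge 2$, so the time integral $\int_0^\infty \bigl(1 + \sigma^{-\frac12 - \frac{n}{2}(1 - \frac1q)}\bigr) e^{-\sigma} \,\mathrm{d}\sigma$ diverges and no bound results. (Your own remark that $f(u)$ is outright bounded when $\alpha \le 0$ points to the repair: take $p$ as \emph{large} as the smoothing estimate allows, namely $p = q$.) Similarly, when $\alpha > 0$ is small, $\frac1\alpha$ may exceed $q$, and the estimate $\|\nabla e^{\sigma\Delta}\varphi\|_{L^q(\Omega)} \le c\bigl(1 + \sigma^{-\frac12 - \frac{n}{2}(\frac1p - \frac1q)}\bigr)\|\varphi\|_{L^p(\Omega)}$ is only available for $p \le q$. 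Both problems are resolved at once by the paper's choice $\lambda := \min\{\frac{1}{\alpha_+}, q\}$, i.e.\ capping the intermediate exponent at $q$ rather than flooring it at $1$.

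Second, you explicitly defer what you yourself call ``the heart of the argument'': the verification that hypothesis \eqref{eq:bdd_nabla_v_l_q:ass_q} makes the singularity integrable. This is precisely where the lemma's assumption does its work and it must actually be carried out; with $\lambda = \frac1\alpha$ (the case $\alpha > 0$, $\frac1\alpha \le q$) one has, since $\frac1q > \frac{(\alpha n - 1)_+}{n}$,
\begin{align*}
  -\frac12 - \frac{n}{2}\left(\alpha - \frac1q\right)
  > -\frac12 - \frac{n}{2}\left(\alpha - \frac{(\alpha n - 1)_+}{n}\right)
  \ge -\frac12 - \frac12 = -1,
\end{align*}
while in the case $\lambda = q$ the exponent is simply $-\frac12$. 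Without this two-line computation (together with the degenerate-case discussion above) the proposal stops exactly where the lemma's hypothesis becomes relevant. A minor further point: your treatment of the initial term, $\|\nabla e^{t(\Delta - 1)} v_0\|_{L^q(\Omega)} \le \|\nabla v_0\|_{L^q(\Omega)}$, is not literally valid, since $\nabla$ does not commute with the Neumann heat semigroup; one needs the estimate $\|\nabla e^{\sigma\Delta}\varphi\|_{L^q(\Omega)} \le c\|\varphi\|_{W^{1,q}(\Omega)}$ from \cite[Lemma~1.3~(iii)]{WinklerAggregationVsGlobal2010}, which is stated for $q \ge 2$ --- this is in fact the reason the lemma assumes $q \ge 2$, a hypothesis your proposal never uses.
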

\begin{proof}

  Let $\lambda := \min\{\frac{1}{\alpha_+}, q\} \ge 1$.
  By standard smoothing estimates for the Neumann Laplace semigroup (cf.\ \cite[Lemma~1.3~(ii) and (iii)]{WinklerAggregationVsGlobal2010}; note that $q \ge 2$)
  there exist $c_1, c_2 \gt 0$ such that we have
  \begin{align*}
        \|\nabla \ure^{\sigma \Delta} \varphi\|_{L^q(\Omega)}
    \le c_1 \|\varphi\|_{W^{1, q}(\Omega)}
    \quad \text{for all $\sigma \gt 0$ and all $\varphi \in W^{1, q}(\Omega)$}
  \end{align*}
  as well as
  \begin{align*}
        \|\nabla \ure^{\sigma \Delta} \varphi\|_{L^q(\Omega)}
    \le c_2 \left(1 + \sigma^{-\frac12 - \frac{n}{2}(\frac{1}{\lambda} - \frac{1}{q})}\right) \|\varphi\|_{L^\lambda(\Omega)}
    \quad \text{for all $\sigma \gt 0$ and all $\varphi \in L^\lambda(\Omega)$}.
  \end{align*}
  
  Therefore,
  \begin{align*}
    &\pe  \|\nabla v(\cdot, t)\|_{L^q(\Omega)} \\
    &\le  \| \nabla \ure^{t(\Delta - 1)} v_0 \|_{L^q(\Omega)}
          + \int_0^t \left\| \nabla \ure^{(t-s) (\Delta - 1)} f(u(\cdot, s)) w(\cdot, s) \right\|_{L^q(\Omega)} \ds \\
    &\le  c_1 \ure^{-t} \|v_0\|_{W^{1, q}(\Omega)}
          + c_2 K_f \sup_{s \in (0, t)} \left\|f(u(\cdot, s)) w(\cdot, s)\right\|_{L^\lambda(\Omega)}
            \int_0^t \left(1 + (t-s)^{-\frac12 - \frac{n}{2}(\frac{1}{\lambda} - \frac{1}{q})}\right) \ure^{-(t-s)} \ds
  \end{align*}
  for all $t \in (0, \tmax)$.

  For $s \ge 1$ and $\alpha \le 0$ we have $s^\alpha \le 1$,
  hence the condition \eqref{eq:main:f} implies that independently of the sign of $\alpha$ we have
  \begin{align*}
    f(s) \le K_f s^{\alpha_+} \quad \text{for all $s \ge 1$,}
  \end{align*}
  hence by Lemma~\ref{lm:l1_bdd} we have for some $c_3 \gt 0$
  \begin{align*}
    &\pe  \sup_{s \in (0, t)} \left\|f(u(\cdot, s)) w(\cdot, s)\right\|_{L^\lambda(\Omega)} \\
    &\le  \sup_{s \in (0, t)} \left(
            \left( \intom |f(u(\cdot, s)) w(\cdot, s)|^\lambda \mathds 1_{\{u \ge 1\}} \right)^\frac1\lambda
            + \left( \intom |f(u(\cdot, s)) w(\cdot, s)|^\lambda \mathds 1_{\{u \lt 1\}} \right)^\frac1\lambda
          \right) \\
    &\le  \sup_{s \in (0, t)} \left(
            K_f \|u^{\alpha_+}(\cdot, s)\|_{L^\lambda(\Omega)} \|w(\cdot, s)\|_{L^\infty(\Omega)}
            + \|f\|_{C^0([0, 1])} |\Omega|^\frac1\lambda \|w(\cdot, s)\|_{L^\infty(\Omega)}
          \right) \\
    &\le  \sup_{s \in (0, t)} \left(
            \left( K_f |\Omega|^{1 - \lambda \alpha_+} \|u(\cdot, s)\|_{L^1(\Omega)}^{\alpha_+}
            + \|f\|_{C^0([0, 1])} |\Omega|^\frac1\lambda \right) \|w(\cdot, s)\|_{L^\infty(\Omega)}
            \right) \\
    &\le  c_3 \sup_{s \in (0, t)} \|w(\cdot, s)\|_{L^\infty(\Omega)}
  \end{align*}
  for all $t \in (0, \tmax)$.

  By \eqref{eq:bdd_nabla_v_l_q:ass_q} we have in the case $\alpha \gt 0$ and $\lambda = \frac{1}{\alpha}$
  \begin{align*}
        -\frac12 - \frac{n}{2} \left(\frac{1}{\lambda} - \frac{1}{q}\right)
    \gt -\frac12 - \frac{n}{2} \left(\alpha - \frac{(\alpha n - 1)_+}{n} \right)
    \ge -\frac12 - \frac{n}{2n} 
    =   -1,
  \end{align*}
  whereas, if $\alpha \le 0$ or $\lambda = q$ (and hence in both cases $\lambda = q$)
  \begin{align*}
        -\frac12 - \frac{n}{2} \left(\frac{1}{\lambda} - \frac{1}{q}\right)
    =   -\frac12 - \frac{n}{2} \left(\frac{1}{q} - \frac{1}{q}\right)
    \gt -1
  \end{align*}
  holds.
  In both cases we conclude
  \begin{align*}
    c_4 := \int_0^\infty \left(1 + s^{-\frac12 - \frac{n}{2}(\frac{1}{\lambda} - \frac{1}{q})}\right) \ure^{-s} \ds \lt \infty,
  \end{align*} 
  so that the statement follows for $C := c_1 \|v_0\|_{W^{1, q}(\Omega)} + c_2 c_3 c_4 K_f$.
\end{proof}

We now combine the previous lemmata to obtain
\begin{lemma} \label{lm:bdd_nabla_v_w}
  If $\alpha \le 1$ and $\alpha \lt \frac{2}{n}$,
  then there exist $q \gt n$ and $C \gt 0$ such that
  \begin{align} \label{eq:bdd_nabla_v_w:statement}
    \|\nabla v(\cdot, t)\|_{L^q} + \|w(\cdot, t)\|_{L^\infty(\Omega)} \le C \quad \text{in $(0, \tmax)$}.
  \end{align}
\end{lemma}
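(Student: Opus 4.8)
The plan is to combine the two preceding lemmata into a single self-referential estimate and to close it by an absorption argument. First I would fix an admissible exponent $q$. The decisive observation is that the hypothesis $\alpha \lt \frac2n$ is exactly what forces $\frac{n}{(\alpha n - 1)_+} \gt n$: if $\alpha \le \frac1n$ then $(\alpha n - 1)_+ = 0$ and the quotient is $+\infty$, whereas if $\frac1n \lt \alpha \lt \frac2n$ (which, since then necessarily $n \ge 2$ and thus $\frac2n \le 1$, is compatible with $\alpha \le 1$) we have $(\alpha n - 1)_+ = \alpha n - 1 \in (0, 1)$ and hence $\frac{n}{\alpha n - 1} \gt n$. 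In either case the interval in \eqref{eq:bdd_nabla_v_l_q:ass_q} meets $(n, \infty)$, so I can choose some
\begin{align*}
  q \in \left(n, \frac{n}{(\alpha n - 1)_+}\right) \cap [2, \infty).
\end{align*}
Since $q \gt n \ge \max\{1, \frac n2\}$, this $q$ is admissible in Lemma~\ref{lm:bdd_w_l_infty}, and by construction it satisfies \eqref{eq:bdd_nabla_v_l_q:ass_q}, so that (recalling $\alpha \le 1$) Lemma~\ref{lm:bdd_nabla_v_l_q} applies as well.

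With $q$ fixed I would invoke both lemmata. Lemma~\ref{lm:bdd_w_l_infty} yields $c_1 \gt 0$ and $b \in (0,1)$ with
\begin{align*}
  \|w(\cdot, t)\|_{L^\infty(\Omega)} \le c_1\left(1 + \sup_{s \in (0, t)} \|\nabla v(\cdot, s)\|_{L^q(\Omega)}^b\right),
\end{align*}
while Lemma~\ref{lm:bdd_nabla_v_l_q} yields $c_2 \gt 0$ with
\begin{align*}
  \|\nabla v(\cdot, t)\|_{L^q(\Omega)} \le c_2\left(1 + \sup_{s \in (0, t)} \|w(\cdot, s)\|_{L^\infty(\Omega)}\right),
\end{align*}
both for all $t \in (0, \tmax)$. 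Writing $N(t) := \sup_{s \in (0, t)} \|\nabla v(\cdot, s)\|_{L^q(\Omega)}$ and using that the right-hand side of the first estimate is nondecreasing in $t$, I would take the supremum over $(0, t)$ in the $w$-estimate to obtain $\sup_{s \in (0, t)} \|w(\cdot, s)\|_{L^\infty(\Omega)} \le c_1(1 + N(t)^b)$, insert this into the $\nabla v$-estimate, and once more pass to the supremum, arriving at
\begin{align*}
  N(t) \le c_2 + c_1 c_2 + c_1 c_2 \, N(t)^b \quad \text{for all $t \in (0, \tmax)$.}
\end{align*}

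The decisive feature is that $b \lt 1$. Because $v \in C^0([0, \infty); W^{1, \infty}(\Omega))$ by Lemma~\ref{lm:ex_crit}, the quantity $N(t)$ is finite for each fixed $t \in (0, \tmax)$, so I may legitimately absorb the sublinear term: Young's inequality gives $c_1 c_2 N(t)^b \le \frac12 N(t) + c_3$ for a suitable $c_3 \gt 0$, whence $N(t) \le 2(c_2 + c_1 c_2 + c_3)$ uniformly in $t$. Feeding this bound back into the estimate from Lemma~\ref{lm:bdd_w_l_infty} bounds $\|w(\cdot, t)\|_{L^\infty(\Omega)}$ uniformly as well, and adding the two bounds yields \eqref{eq:bdd_nabla_v_w:statement}. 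I expect the only genuinely delicate point to be this closing step: one must know a priori that $N(t) \lt \infty$ for each fixed $t$ (which the local regularity of $v$ supplies) before the strict inequality $b \lt 1$ can be exploited to turn the two one-sided, mutually coupled estimates into an honest uniform bound.
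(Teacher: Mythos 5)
Your proposal is correct and follows essentially the same route as the paper: choose $q \gt n$ in the admissible range $[2, \frac{n}{(\alpha n - 1)_+})$, couple Lemma~\ref{lm:bdd_w_l_infty} with Lemma~\ref{lm:bdd_nabla_v_l_q} into a self-referential bound $N(t) \le c + c\,N(t)^b$ for a supremum functional, and exploit $b \lt 1$ to close the estimate. The only cosmetic differences are that the paper tracks $\sup_s \|w(\cdot,s)\|_{L^\infty(\Omega)}$ rather than $\sup_s \|\nabla v(\cdot,s)\|_{L^q(\Omega)}$ and absorbs via a case distinction ($N(t) \ge 1$, divide by $N^b$) instead of Young's inequality; your explicit remark that $N(t) \lt \infty$ for each fixed $t \lt \tmax$ (needed for either absorption argument) is a point the paper leaves implicit.
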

\begin{proof}
  By assumption on $\alpha$ we have
  \begin{align*}
        \frac{n}{(\alpha n - 1)_+}
    \gt \frac{n}{2 - 1}
    =   n.
  \end{align*}
  As also $\frac{1}{(\alpha-1)_+} = \infty$,
  we can then find $q \in [2, \frac{n}{(\alpha n - 1)_+}) \cap (n, \infty)$.

  Lemma~\ref{lm:bdd_w_l_infty} and Lemma~\ref{lm:bdd_nabla_v_l_q} provide $C_1, C_2 \gt 0$ and $b \in (0, 1)$
  such that
  \begin{align} \label{eq:bdd_nabla_v_w:nabla_v}
    \|\nabla v(\cdot, t)\|_{L^q(\Omega)}  &\le C_1 \left(1 + \sup_{s \in (0, t)} \|w(\cdot, s)\|_{L^\infty(\Omega)} \right)
  \intertext{and} \notag
    \|w(\cdot, t)\|_{L^\infty(\Omega)}    &\le C_2 \left(1 + \sup_{s \in (0, t)} \|\nabla v(\cdot, s)\|_{L^q(\Omega)}^b \right)
  \end{align}
  hold for all $t \in (0, \tmax)$,
  hence $N(t) := \sup_{s \in (0, t)} \|w(\cdot, s)\|_{L^\infty(\Omega)}$ fulfills
  \begin{align*}
    N(t) \le C_2 + C_1^a C_2 (1 + N(t))^b.
  \end{align*}
  for all $t \in (0, \tmax)$.
  Let $t \in (0, \tmax)$. If $N(t) \ge 1$, then
  \begin{align*}
        N^{1-b}(t)
    \le C_2 N^{-b}(t) + C_1^a C_2 (N^{-b}(t) + 1)^b
    \le C_2 + 2^b C_1^b C_2 
    =:  C_3.
  \end{align*}
  Therefore, for all $t \in (0, \tmax)$, we have $N(t) \le \max\{1, C_3^\frac{1}{1-b}\}$.
  Together with \eqref{eq:bdd_nabla_v_w:nabla_v} this implies the existence of $C \gt 0$
  such that \eqref{eq:bdd_nabla_v_w:statement} holds.
\end{proof}

\subsection{Case $\alpha \gt 1$}
The conditions $\alpha \lt \frac{2}{n}$ and $\alpha \gt 1$ can only be simultaneously fulfilled
if $n = 1$. In that case we are able to immediately derive an $L^\infty$ bound for $w$:

\begin{lemma} \label{lm:n1_bdd_w}
  If $n = 1$, then $\{w(\cdot, t): t \in (0, \tmax)\}$ is bounded in $L^\infty(\Omega)$.
\end{lemma}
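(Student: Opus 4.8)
The plan is to prove Lemma~\ref{lm:n1_bdd_w} by exploiting the special structure available in the one-dimensional setting, where Sobolev embeddings are particularly favorable. Since $n = 1$, the embedding $W^{1, 1}(\Omega) \embed L^\infty(\Omega)$ holds, so a bound on $\|v(\cdot, t)\|_{W^{1, 1}(\Omega)}$ would directly control $\|v(\cdot, t)\|_{L^\infty(\Omega)}$, which is what one ultimately wants to feed into the $w$-equation via the heat semigroup representation. First I would recall from Lemma~\ref{lm:l1_bdd} that $\|v(\cdot, t)\|_{L^1(\Omega)}$ is already bounded uniformly in time. The missing ingredient is therefore a uniform bound on $\|v_x(\cdot, t)\|_{L^1(\Omega)}$; given the announced Lemma~\ref{lm:n1_bdd_vx}, which provides boundedness of the functional $\frac{1}{\alpha} \intom u^\alpha + \frac{1}{2} \intom v_x^2$, I would in fact obtain the stronger $L^2$ bound on $v_x$, whence $\|v(\cdot, t)\|_{W^{1, 2}(\Omega)} \le C$ and thus $\|v(\cdot, t)\|_{L^\infty(\Omega)} \le C'$ by the one-dimensional embedding $W^{1, 2}(\Omega) \embed L^\infty(\Omega)$.

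With an $L^\infty$ bound on $v$ in hand, the second main step is to transfer this to $w$ through the third equation. I would write $w$ via the variation-of-constants formula as
\begin{align*}
  w(\cdot, t) = \ure^{t(\Delta - 1)} w_0 + \int_0^t \ure^{(t-s)(\Delta - 1)} v(\cdot, s) \ds,
\end{align*}
and then estimate in $L^\infty(\Omega)$ using the order-preserving property of the semigroup together with the uniform bound $\|v(\cdot, s)\|_{L^\infty(\Omega)} \le C'$. Since the semigroup $\ure^{t(\Delta - 1)}$ contracts on $L^\infty(\Omega)$, one finds
\begin{align*}
  \|w(\cdot, t)\|_{L^\infty(\Omega)}
  \le \ure^{-t} \|w_0\|_{L^\infty(\Omega)} + C' \int_0^t \ure^{-(t-s)} \ds
  \le \|w_0\|_{L^\infty(\Omega)} + C',
\end{align*}
which is uniform in $t \in (0, \tmax)$ and yields the claim.

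I expect the genuine difficulty to lie entirely in securing the $L^2$ bound on $v_x$, i.e.\ in the functional estimate that Lemma~\ref{lm:n1_bdd_vx} is slated to provide; the passage from that bound to the $L^\infty$ bound on $v$ and then to $w$ is comparatively routine in one space dimension. Accordingly, in this lemma I would simply invoke Lemma~\ref{lm:n1_bdd_vx} as given, combine it with the $L^1$ bound of Lemma~\ref{lm:l1_bdd}, apply the embedding $W^{1, 2}(\Omega) \embed L^\infty(\Omega)$ valid for $n = 1$, and close the argument through the heat semigroup representation above. The only mild subtlety is to ensure that the Sobolev embedding is applied with a constant independent of $t$, which is automatic here since the $W^{1, 2}$ bound is uniform in time.
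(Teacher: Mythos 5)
Your proposal has a genuine gap: it is circular within the paper's logical structure. You invoke Lemma~\ref{lm:n1_bdd_vx} (boundedness of $\frac{1}{\alpha}\intom u^\alpha + \frac12 \intom v_x^2$) to obtain $\|v_x(\cdot,t)\|_{L^2(\Omega)} \le C$, hence via $W^{1,2}(\Omega) \embed L^\infty(\Omega)$ a uniform bound on $\|v(\cdot,t)\|_{L^\infty(\Omega)}$, which you then feed into the variation-of-constants formula for $w$. But the proof of Lemma~\ref{lm:n1_bdd_vx} itself requires precisely the $L^\infty$ bound on $w$ that Lemma~\ref{lm:n1_bdd_w} is supposed to deliver: when the second equation is tested with $-v_{xx}$, the resulting term $-\intom f(u)\, w\, v_{xx}$ can only be handled by Young's inequality after one knows $|w| \le c_2$, and the paper's proof of Lemma~\ref{lm:n1_bdd_vx} cites Lemma~\ref{lm:n1_bdd_w} at exactly that point. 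This is also why the $w$ bound is placed before the $v_x$ bound in the paper. As written, your argument replaces the statement to be proved by another statement whose proof depends on it, so nothing is actually established.

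The repair is that no detour through $v_x$ (or through an $L^\infty$ bound on $v$) is needed at all. Because $n=1$, the $L^1$--$L^\infty$ smoothing estimate for the Neumann heat semigroup reads $\|\ure^{\sigma\Delta}\varphi\|_{L^\infty(\Omega)} \le c_1 \left(1+\sigma^{-\frac12}\right)\|\varphi\|_{L^1(\Omega)}$, and the singularity $\sigma^{-\frac12}$ is integrable against $\ure^{-\sigma}$ on $(0,\infty)$. So in your variation-of-constants formula you can estimate $\|\ure^{(t-s)(\Delta-1)}v(\cdot,s)\|_{L^\infty(\Omega)} \le c_1\left(1+(t-s)^{-\frac12}\right)\ure^{-(t-s)}\|v(\cdot,s)\|_{L^1(\Omega)}$ and invoke only the uniform $L^1$ bound on $v$ from Lemma~\ref{lm:l1_bdd}; this closes the proof in one step, which is exactly what the paper does. (Your semigroup computation itself, granted an $L^\infty$ bound on $v$, would be fine; the defect lies solely in where that bound is supposed to come from.)
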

\begin{proof}
  By the maximum principle
  and known smoothing estimates for the Neumann Laplace semigroup (cf.\ \cite[Lemma~1.3~(i)]{WinklerAggregationVsGlobal2010})
  we have for some $c_1 \gt 0$
  \begin{align*}
          \|w(\cdot, t)\|_{L^\infty(\Omega)}
    &\le  \|\ure^{t (\Delta-1)} w_0\|_{L^\infty(\Omega)} + \int_0^t \|\ure^{(t-s) (\Delta-1)} v(\cdot, s) \|_{L^\infty(\Omega)} \ds \\
    &\le  \ure^{-t} \|w_0\|_{L^\infty(\Omega)}
          + c_1 \sup_{s \in (0, t)} \|v(\cdot, s)\|_{L^1(\Omega)} \int_0^t (1 + (t-s)^{-\frac{1}{2}}) \ure^{-(t-s)} \ds \\
    &\le  \|w_0\|_{L^\infty(\Omega)}
          + c_1 \sup_{s \in (0, \tmax)} \|v(\cdot, s)\|_{L^1(\Omega)} \int_0^\infty (1 + s^{-\frac{1}{2}}) \ure^{-s} \ds
  \end{align*}
  in $(0, \tmax)$.
  The statement then follows because of Lemma~\ref{lm:l1_bdd} and finiteness of the last integral in the inequality above.
\end{proof}

Using $L^p$-$L^q$ estimates as in Lemma~\ref{lm:bdd_nabla_v_l_q} in order to obtain $L^q$ bounds for $v_x$
does not seem to be helpful here, as an $L^1$ bound for $u$ does not imply an $L^p$ bound for $f(u)$ for any $p \ge 1$ anymore.
Therefore, we derive boundedness of a certain functional instead,
relying at a crucial point on the strength of the Gagliardo-Nirenberg inequality in the one-dimensional setting.

\begin{lemma} \label{lm:n1_bdd_vx}
  Let $n = 1$ and $\alpha \in (1, 2)$.
  Then there exists $C \gt 0$ such that
  \begin{align*}
    \|v_x(\cdot, t)\|_{L^2(\Omega)} \le C \quad \text{for all $t \in (0, \tmax)$}.
  \end{align*}
\end{lemma}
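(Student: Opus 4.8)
The goal is to bound $\|v_x\|_{L^2}$ uniformly in time for the one-dimensional system with $\alpha \in (1,2)$. The plan is to study the functional
\begin{align*}
  y(t) := \frac{1}{\alpha} \intom u^\alpha(\cdot, t) + \frac12 \intom v_x^2(\cdot, t),
\end{align*}
whose boundedness was announced in the introduction, and to derive a differential inequality of the form $y' \le -c y + C$ for suitable constants. Boundedness of $\|v_x\|_{L^2}$ then follows immediately from an ODE comparison argument, since the second summand controls exactly the quantity we want.

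Let me think about what we have available. We already know from Lemma~\ref{lm:n1_bdd_w} that $w$ is bounded in $L^\infty$, so $\|w(\cdot, t)\|_{L^\infty(\Omega)} \le M$ for some $M \gt 0$ and all $t$. We also have the $L^1$ bounds on $u$ and $v$ from Lemma~\ref{lm:l1_bdd}.

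**Computing the evolution of the functional.**

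First I would differentiate in time and use the PDEs. For the $u^\alpha$ term, testing the first equation with $u^{\alpha - 1}$ produces a good dissipation term $-(\alpha-1)\intom u^{\alpha-2} u_x^2$, a chemotaxis cross term $+(\alpha-1)\intom u^{\alpha-1} u_x v_x$ (after integrating the transport term by parts), a decay term $-\intom u^\alpha$, the conversion term $-\intom u^{\alpha-1} f(u) w$ (which has a favorable sign and can be discarded), and a production contribution from $\kappa$. For the $\frac12 v_x^2$ term, testing $\Delta v_x = (v_t + v - f(u)w)_x$ against $v_x$ (equivalently, multiplying the $v$-equation by $-v_{xx}$ and integrating by parts) yields $-\intom v_{xx}^2 - \intom v_x^2 + \intom f(u) w\, v_{xx}$.

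The crucial structural point in one dimension is that the ``bad'' chemotaxis term and the production term can both be absorbed. I would rewrite the cross term using $u^{\alpha-1} u_x = \frac{1}{\alpha}(u^\alpha)_x$ and integrate by parts so it becomes $-\frac{1}{\alpha}\intom u^\alpha v_{xx}$; combined with the $\intom f(u) w\, v_{xx}$ term and the $-\intom v_{xx}^2$ dissipation, a Young-inequality split should leave a residual proportional to $\intom u^{2\alpha}$ (from the $v_{xx}$ absorptions) plus bounded terms coming from $w$.

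**The main obstacle and how to close the loop.**

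The hard part will be controlling $\intom u^{2\alpha}$ by the dissipation term $\intom u^{\alpha-2} u_x^2$ (equivalently $\intom |(u^{\alpha/2})_x|^2$) together with the low-order $L^1$ mass, and this is precisely where the one-dimensional Gagliardo--Nirenberg inequality is decisive. Writing $\psi := u^{\alpha/2}$, so that $\intom u^{2\alpha} = \|\psi\|_{L^4}^4$ and $\intom u^{\alpha-2} u_x^2 = \frac{4}{\alpha^2}\|\psi_x\|_{L^2}^2$, the relevant estimate is
\begin{align*}
  \|\psi\|_{L^4(\Omega)}^4 \le C_{GN} \|\psi_x\|_{L^2(\Omega)} \|\psi\|_{L^2(\Omega)}^3 + C_{GN} \|\psi\|_{L^2(\Omega)}^4,
\end{align*}
where $\|\psi\|_{L^2}^2 = \intom u^\alpha$. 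The appearance of $\|\psi_x\|_{L^2}$ only to the \emph{first} power is the special feature of $n=1$; since $\alpha \lt 2$ guarantees $\alpha - 1 \lt 1$, the exponents work out so that the $\|\psi_x\|_{L^2}$ contribution can be absorbed into the dissipation by Young's inequality, leaving a superlinear-but-controllable dependence on $\intom u^\alpha$. I would then need to bound $\intom u^\alpha$ itself in terms of $y$ (via interpolation against the $L^1$ bound) to close the argument; here too the constraint $\alpha \lt 2$ should ensure the resulting power of $y$ is strictly sublinear, so that after collecting all terms one arrives at $y' \le -c\, y + C$. The delicate bookkeeping lies entirely in tracking these exponents and verifying that every bad term is dominated; once the differential inequality is in hand, the conclusion is routine.
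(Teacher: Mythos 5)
Your overall architecture coincides with the paper's: the same functional $y(t) = \frac{1}{\alpha}\intom u^\alpha + \frac12 \intom v_x^2$, the same testing of the first equation by $u^{\alpha-1}$ and of the second by $-v_{xx}$, the same rewriting of the chemotaxis term as a multiple of $\intom u^\alpha v_{xx}$ (you state the factor as $\frac1\alpha$ where it should be $\frac{\alpha-1}{\alpha}$, but that is immaterial), the same absorption into $-\intom v_{xx}^2$ leaving a residual proportional to $\intom u^{2\alpha}$, and the same ODE comparison at the end. The gap lies in the Gagliardo--Nirenberg step, which is the crux of the lemma. You interpolate $\|\psi\|_{L^4(\Omega)}^4 \le C\|\psi_x\|_{L^2(\Omega)}\|\psi\|_{L^2(\Omega)}^3 + C\|\psi\|_{L^2(\Omega)}^4$ with $\psi = u^{\alpha/2}$, whose low-order factor $\|\psi\|_{L^2(\Omega)}^2 = \intom u^\alpha$ is not an a priori bounded quantity --- it is itself part of the functional being estimated. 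After Young's inequality this leaves terms of order $\left(\intom u^\alpha\right)^3$ and $\left(\intom u^\alpha\right)^2$, i.e.\ superlinear powers of $y$, and a differential inequality of the form $y' \le -cy + Cy^3 + C$ does not yield boundedness. Your proposed repair --- bounding $\intom u^\alpha$ by a \emph{strictly sublinear} power of $y$ via interpolation against the $L^1$ bound --- cannot be realized: interpolation between $L^1(\Omega)$ and $L^p(\Omega)$ controls $\|u\|_{L^\alpha(\Omega)}$ only through some $\|u\|_{L^p(\Omega)}$ with $p \ge \alpha$, and the only such norm controlled by $y$ is $\|u\|_{L^\alpha(\Omega)}$ itself, which returns a trivial identity rather than a sublinear bound.

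The missing idea, which is exactly what the paper's proof uses, is to interpolate all the way down to the mass level: by Lemma~\ref{lm:l1_bdd} the quantity $\|\psi\|_{L^{2/\alpha}(\Omega)} = \left(\intom u\right)^{\alpha/2}$ is bounded by a constant, and the one-dimensional Gagliardo--Nirenberg inequality gives
\begin{align*}
      \intom u^{2\alpha}
  =   \|\psi\|_{L^4(\Omega)}^4
  \le C \left( \intom \psi_x^2 \right)^{2b} + C,
  \qquad
  4b = \frac{2(2\alpha - 1)}{\alpha + 1} < 2
  \iff
  \alpha < 2,
\end{align*}
so that Young's inequality absorbs the entire residual into the dissipation $-\frac{4(\alpha-1)}{\alpha^2}\intom \psi_x^2$, leaving only a constant; the linear damping $-cy$ then comes from the retained terms $-\frac12 \intom u^\alpha - \intom v_x^2$, not from any interpolation. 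Your two-step route can in fact be repaired along these lines: a second application of Gagliardo--Nirenberg yields $\|\psi\|_{L^2(\Omega)} \le C\|\psi_x\|_{L^2(\Omega)}^{\theta} + C$ with $\theta = \frac{\alpha-1}{\alpha+1}$, whence $\|\psi_x\|_{L^2(\Omega)}\|\psi\|_{L^2(\Omega)}^3 \le C\|\psi_x\|_{L^2(\Omega)}^{1+3\theta} + C\|\psi_x\|_{L^2(\Omega)}$ with $1 + 3\theta < 2$ exactly when $\alpha < 2$. But the closing estimate must be a sublinear power of the \emph{dissipation} (with mass-dependent constants), not a sublinear power of $y$; as written, your argument does not close.
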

\begin{proof}
  We test the first equation in \eqref{prob:p} with $u^{\alpha - 1}$ and use Young's inequality to obtain $c_1 \gt 0$ such that
  \begin{align*}
            \frac{1}{\alpha} \ddt \intom u^\alpha
    &\le  - (\alpha - 1) \intom u^{\alpha - 2} u_x^2
          + (\alpha - 1) \intom u^{\alpha - 1} u_x v_x
          - \intom u^\alpha
          + \kappa \intom u^{\alpha - 1} \\
    &\le  - \frac{4(\alpha-1)}{\alpha^2} \intom |(u^\frac{\alpha}{2})_x|^2
          - \frac{\alpha-1}{\alpha} \intom u^\alpha v_{xx}
          - \frac12 \intom u^\alpha
          + c_1
  \end{align*}
  holds in $(0, \tmax)$.

  Lemma~\ref{lm:n1_bdd_w} provides $c_2 \gt 0$ such that $|w| \le c_2$ in $\ombar \times (0, \tmax)$.
  Hence, multiplying the second equation in \eqref{prob:p} with $-v_{xx}$ yields
  \begin{align*}
            \frac12 \ddt \intom v_x^2 
    &=    - \intom v_{xx}^2
          - \intom v_x^2
          - \intom f(u) w v_{xx} \\
    &\le  - \intom v_{xx}^2
          - \intom v_x^2
          + c_2 \intom \left( K_f u^\alpha \mathds 1_{\{u \ge 1\}} + \|f\|_{C^0([0, 1])} \mathds 1_{\{u \lt 1\}} \right) |v_{xx}|
  \end{align*}
  in $(0, \tmax)$.

  Furthermore, by again relying on Young's inequality we have 
  \begin{align*}
          -\frac{\alpha-1}{\alpha} \intom u^\alpha v_{xx}
          + c_2 \intom \left( K_f u^\alpha \mathds 1_{\{u \ge 1\}} + \|f\|_{C^0([0, 1])} \mathds 1_{\{u \lt 1\}} \right) |v_{xx}|
     \le  c_3 \intom u^{2\alpha} + \intom v_{xx}^2 + c_4
  \end{align*}
  in $(0, \tmax)$,
  where we have set $c_3 := \frac{(c_2 K_f)^2 + (\frac{\alpha-1}{\alpha})^2}{2} \gt 0$
  and $c_4 := \frac{(c_2 \|f\|_{C^0([0, 1])})^2}{2} \gt 0$.
  We conclude that the inequality
  \begin{align} \label{eq:n1_bdd_vx:func_ineq_1}
            \ddt \left( \frac{1}{\alpha} \intom u^\alpha + \frac12 \intom v_x^2 \right) 
    &\le  - \left( \frac12 \intom u^\alpha + \intom v_x^2 \right)
          - \frac{4(\alpha-1)}{\alpha^2} \intom |(u^\frac{\alpha}{2})_x|^2
          + c_3 \intom u^{2 \alpha}
          + c_5
  \end{align}
  holds in $(0, \tmax)$, where $c_5 := c_1 + c_4$.

  The Gagliardo-Nirenberg inequality provides $c_6 \gt 0$ such that
  \begin{align*}
        \|\varphi\|_{L^4(\Omega)}^4
    \le c_6 \|\varphi_x\|_{L^2(\Omega)}^{4b} \|\varphi\|_{L^\frac{2}{\alpha}}^{4(1-b)}
        + c_6 \|\varphi\|_{L^\frac{2}{\alpha}}^{4}
    \quad \text{for all $\varphi \in W^{1, 2}(\Omega)$},
  \end{align*}
  wherein
  \begin{align*}
    b = \frac12 \cdot \frac{2 \alpha - 1}{\alpha + 1} \lt \frac12,
  \end{align*}
  as $\alpha \lt 2$.

  By Lemma~\ref{lm:l1_bdd} there exists $c_7 \gt 0$ such that $\|u(\cdot, t)\|_{L^1(\Omega)} \le c_7$ for all $t \in (0, \tmax)$.
  Hence,
  \begin{align} \label{eq:n1_bdd_vx:u_2alpha_gn}
          \intom u^{2 \alpha}
    &=    \|u^\frac{\alpha}{2}\|_{L^4(\Omega)}^4 \notag \\
    &\le  c_6 \|(u^\frac{\alpha}{2})_x\|_{L^2(\Omega)}^{4b} \|u^\frac{\alpha}{2}\|_{L^\frac{2}{\alpha}}^{4(1-b)}
          + c_6 \|u^\frac{\alpha}{2}\|_{L^\frac{2}{\alpha}}^{4} \notag \\
    &=    c_6 \left( \intom (u^\frac{\alpha}{2})_x^2 \right)^{2b} c_7^{2 \alpha (1-b)} + c_7^{2 \alpha}
  \end{align}
  in $(0, \tmax)$.
  
  As $2b \lt 1$, Young's inequality implies the existence of $c_8 \gt 0$ such that
  \begin{align} \label{eq:n1_bdd_vx:u_x_young}
        \left( \intom (u^\frac{\alpha}{2})_x^2 \right)^{2a}
    \le \frac{4(\alpha-1)}{c_3 c_6 c_7^{2 \alpha (1-b)}\alpha^2} \intom (u^\frac{\alpha}{2})_x^2 + c_8
    \quad \text{in $(0, \tmax)$}.
  \end{align}
  Combining \eqref{eq:n1_bdd_vx:func_ineq_1}, \eqref{eq:n1_bdd_vx:u_2alpha_gn} and \eqref{eq:n1_bdd_vx:u_x_young} gives
  \begin{align*}
            \ddt \left( \frac{1}{\alpha} \intom u^\alpha + \frac12 \intom v_x^2 \right) 
    &\le  - c_9 \left( \frac{1}{\alpha} \intom u^\alpha + \frac12 \intom v_x^2 \right)
          + c_{10}
    \quad \text{in $(0, \tmax)$}
  \end{align*}
  for some $c_9, c_{10} \gt 0$, so that the statement follows by an ODE comparison argument.
\end{proof}

\section{$L^\infty$ bound for $u$. Proof of Theorem~\ref{th:main}}
We now use the $L^q$ bounds for $\nabla v$ to obtain
\begin{lemma} \label{lm:bdd_u_l_infty}
  If $\alpha \lt \frac{2}{n}$,
  then $\{u(\cdot, t): t \in (0, \tmax)\}$ is bounded in $L^\infty(\Omega)$.
\end{lemma}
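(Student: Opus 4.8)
The plan is to obtain the $L^\infty$ bound for $u$ via a variation-of-constants representation combined with the smoothing estimates for the Neumann heat semigroup, exactly as in the proofs of the preceding lemmata. First I would observe that regardless of whether $\alpha \le 1$ or $\alpha > 1$, the earlier results furnish the crucial input: some $q > n$ and a constant such that $\|\nabla v(\cdot, t)\|_{L^q(\Omega)} \le C$ for all $t \in (0, \tmax)$ (Lemma~\ref{lm:bdd_nabla_v_w} when $\alpha \le 1$, and Lemma~\ref{lm:n1_bdd_vx} together with $W^{1,2}(\Omega) \embed L^\infty(\Omega)$ in the one-dimensional case $\alpha > 1$, which then also bounds $\|\nabla v\|_{L^q}$ for any $q$). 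In addition, Lemma~\ref{lm:bdd_nabla_v_w} and Lemma~\ref{lm:n1_bdd_w} supply an $L^\infty$ bound for $w$, and Lemma~\ref{lm:l1_bdd} gives the $L^1$ bound for $u$.

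Next I would write $u$ using Duhamel's formula,
\begin{align*}
  u(\cdot, t) = \ure^{t(\Delta - 1)} u_0
    - \int_0^t \ure^{(t-s)(\Delta - 1)} \nabla \cdot (u \nabla v)(\cdot, s) \ds
    - \int_0^t \ure^{(t-s)(\Delta - 1)} (f(u) w)(\cdot, s) \ds
    + \kappa \int_0^t \ure^{(t-s)(\Delta - 1)} \ds,
\end{align*}
and estimate each term in $L^\infty(\Omega)$. The first term is controlled by $\ure^{-t}\|u_0\|_{L^\infty(\Omega)} \le \|u_0\|_{L^\infty(\Omega)}$ by the maximum principle, and the last (source) term is bounded by $\kappa \int_0^\infty \ure^{-s}\ds = \kappa$. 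The delicate contributions are the two integral terms. For the reaction term $f(u)w$, since $w$ and $\|u\|_{L^1}$ are already controlled, boundedness of $f(u)$ in $L^{1/\alpha_+}(\Omega)$ — obtained just as in Lemma~\ref{lm:bdd_nabla_v_l_q} by splitting into $\{u \ge 1\}$ and $\{u < 1\}$ — lets me bound this term by feeding an $L^p$ norm (for suitable $p$) into the estimate $\|\ure^{\sigma\Delta}\varphi\|_{L^\infty} \le c(1+\sigma^{-n/(2p)})\|\varphi\|_{L^p}$ and integrating the resulting kernel against $\ure^{-(t-s)}$; finiteness of that time integral requires $p > n/2$, which the hypothesis $\alpha < 2/n$ is designed to guarantee.

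The main obstacle, and the term driving the argument, is the chemotactic flux $\nabla \cdot (u \nabla v)$. For this I would use the gradient smoothing estimate $\|\ure^{\sigma\Delta}\nabla \cdot \varphi\|_{L^\infty} \le c(1+\sigma^{-1/2 - n/(2q)})\|\varphi\|_{L^q}$ together with $\|u\nabla v\|_{L^q} \le \|u\|_{L^\infty}\|\nabla v\|_{L^q}$. Since $\|\nabla v\|_{L^q}$ is bounded but $\|u\|_{L^\infty}$ is exactly the quantity I am trying to control, this produces a self-referential inequality of the form $M(t) \le c + \theta \, M(t)$ with $M(t) := \sup_{s\in(0,t)}\|u(\cdot,s)\|_{L^\infty(\Omega)}$, where $\theta$ arises from the time integral of the singular kernel. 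The key point is that the exponent $-\tfrac12 - \tfrac{n}{2q} > -1$ precisely when $q > n$, so the integral $\int_0^\infty (1+s^{-1/2-n/(2q)})\ure^{-s}\ds$ converges; choosing a short initial interval (or equivalently absorbing via the finiteness of the kernel on a small time window) makes the prefactor of $M(t)$ strictly less than one, whence $M(t)$ can be absorbed into the left-hand side. Because $M$ is nondecreasing and the resulting bound is uniform in $t \in (0, \tmax)$, this yields the claimed $L^\infty$ bound for $u$, completing the proof.
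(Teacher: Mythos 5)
Your overall frame (a Duhamel representation for $u$, Neumann heat semigroup smoothing, and the $L^q$ bound on $\nabla v$ from the earlier lemmata) is the same as the paper's, but the step that is supposed to close the argument does not work. Estimating the chemotactic term via $\|u\nabla v\|_{L^q(\Omega)} \le \|u\|_{L^\infty(\Omega)}\|\nabla v\|_{L^q(\Omega)}$ produces the \emph{linear} self-referential inequality $M(t) \le c + \theta M(t)$, and your claim that $\theta$ can be made smaller than $1$ by ``choosing a short initial interval'' has no basis: $\theta$ is the fixed number $c_2\, c_1 \int_0^\infty \bigl(1+\sigma^{-\frac12-\frac{n}{2q}}\bigr)\ure^{-\sigma}\diff\sigma$, which may well exceed $1$, and splitting the time integral at $t-\tau$ does not help, because the portion over $(0,t-\tau)$ still carries $\sup_{s\in(0,t)}\|u(\cdot,s)\|_{L^\infty(\Omega)}$ with a prefactor that does not shrink as $\tau \da 0$ (the factor $\ure^{-(t-s)}$ is of order one there). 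Iterating on short windows instead yields bounds that grow geometrically in the number of windows, i.e.\ at best exponential-in-time control; that proves neither the lemma (which asserts a bound uniform on $(0,\tmax)$, with $\tmax=\infty$ in the end) nor the boundedness part of Theorem~\ref{th:main}. The paper closes the loop differently, and this is the idea your proposal is missing: fix $r\in(n,q)$, estimate $\|u\nabla v\|_{L^r(\Omega)} \le \|u\|_{L^{rq/(q-r)}(\Omega)}\|\nabla v\|_{L^q(\Omega)}$, and interpolate $\|u\|_{L^{rq/(q-r)}(\Omega)} \le \|u\|_{L^\infty(\Omega)}^{\theta}\|u\|_{L^1(\Omega)}^{1-\theta}$ with $\theta = 1-\frac{q-r}{rq}\in(0,1)$, using the $L^1$ bound of Lemma~\ref{lm:l1_bdd}. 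This turns the self-reference into the sublinear inequality $L(t) \le c_3(1+L(t)^\theta)$ with $\theta\lt1$, which forces $\sup_{t\in(0,\tmax)} L(t)\lt\infty$ without any smallness requirement.

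There is a second gap: your treatment of the reaction term $f(u)w$ breaks down exactly in the case $\alpha\gt1$ (i.e.\ $n=1$, $\alpha\in(1,2)$), because there $1/\alpha_+ = 1/\alpha \lt 1$, so the splitting argument of Lemma~\ref{lm:bdd_nabla_v_l_q} yields no $L^p$ bound for $f(u)$ with $p\ge1$, while the smoothing estimate you want to feed it into requires $p\ge1$; the paper points out precisely this obstruction in the discussion preceding Lemma~\ref{lm:n1_bdd_vx}. The paper's proof never estimates this term at all: since $f(u)w\ge0$ and the semigroup is order preserving, one has $0 \le u(\cdot,t) \le u(\cdot,t) + \int_0^t \ure^{(t-s)(\Delta-1)}f(u(\cdot,s))w(\cdot,s)\ds$, and by the variation-of-constants formula the right-hand side equals the Duhamel representation of $u$ with the reaction term deleted, so only $u_0$, the chemotaxis integral and the $\kappa$ term remain to be estimated. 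Adopting this positivity trick (together with the interpolation above) repairs your proof in both cases at once. A minor point: your remark that $W^{1,2}(\Omega)\embed L^\infty(\Omega)$ upgrades the $L^2$ bound on $v_x$ to $L^q$ for all $q$ is unjustified as stated (it would require control of $v_{xx}$), but it is also unnecessary, since for $n=1$ the choice $q=2\gt n$ already suffices.
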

\begin{proof}
  We follow an idea used in \cite[Lemma 3.2]{BellomoEtAlMathematicalTheoryKeller2015}.
  
  Lemma~\ref{lm:bdd_nabla_v_w} (for the case $\alpha \le 1$) and Lemma~\ref{lm:n1_bdd_vx} (for the case $\alpha \gt 1$)
  provide $q \gt n$ and $c_1 \gt 0$ such that
  \begin{align*}
    \|\nabla v(\cdot, t)\|_{L^q(\Omega)} \le c_1 \quad \text{for all $t \in (0, \tmax)$.}
  \end{align*}
  We fix an arbitrary $r \in (n, q)$.

  As
  \begin{align*}
        0
    \le u(\cdot, t)
    \le u(\cdot, t) + \int_0^t \ure^{(t-s)(\Delta -1)} f(u(\cdot, s)) w(\cdot, s) \ds
    \quad \text{for $t \in (0, \tmax)$}
  \end{align*}
  by Lemma~\ref{lm:ex_crit} and positivity of the Neumann heat semigroup,
  we have
  \begin{align*}
          \|u(\cdot, t)\|_{L^\infty(\Omega)}
    &\le  \left\|u(\cdot, t) + \int_0^t \ure^{(t-s)(\Delta -1)} f(u(\cdot, s)) w(\cdot, s) \ds\right\|_{L^\infty(\Omega)}
  \end{align*}
  for all $t \in (0, \tmax)$.

  Therefore smoothing estimates of the Neumann Laplace semigroup (cf.\ \cite[Lemma~1.3~(iv)]{WinklerAggregationVsGlobal2010})
  and the maximum principle
  yield $c_2 \gt 0$ such that
  \begin{align*}
    &\pe  \|u(\cdot, t)\|_{L^\infty(\Omega)} \\
    &\le  \|\ure^{t (\Delta-1)} u_0\|_{L^\infty}
          + \int_0^t \|\ure^{(t-s) (\Delta-1)} \nabla \cdot (u(\cdot, s) \nabla v(\cdot, s))\|_{L^\infty(\Omega)} \ds
          + \kappa \int_0^t \ure^{-(t-s)} \ds \\
    &\le  \ure^{-t} \|u_0\|_{L^\infty}
          + c_2 \int_0^t (1 + (t-s)^{-\frac12 - \frac{n}{2r}}) \ure^{-(t-s)}\|u(\cdot, s) \nabla v(\cdot, s)\|_{L^r(\Omega)} \ds
          + \kappa \\
    &\le  \|u_0\|_{L^\infty}
          + c_2 \sup_{s \in (0, t)} \|u(\cdot, s) \nabla v(\cdot, s)\|_{L^r(\Omega)}
            \int_0^\infty (1 + s^{-\frac12 - \frac{n}{2r}}) \ure^{-s} \ds
          + \kappa
  \end{align*}
  holds for all $t \in (0, \tmax)$.

  As $r \gt n$ we have therein
  \begin{align*}
    \int_0^\infty (1 + s^{-\frac12 - \frac{n}{2r}}) \ds \lt \infty
  \end{align*}
  and we may invoke Hölder's inequality to obtain
  \begin{align*}
          \sup_{s \in (0, t)} \|u(\cdot, s) \nabla v(\cdot, s)\|_{L^r(\Omega)}   
    &\le  \sup_{s \in (0, t)} \left( \|u(\cdot, s)\|_{L^\frac{rq}{q-r}} \|\nabla v(\cdot, s)\|_{L^q(\Omega)} \right) \\
    &\le  \sup_{s \in (0, t)} \left( \|u(\cdot, s)\|_{L^\infty(\Omega)}^\theta \|u(\cdot, s)\|_{L^1(\Omega)}^{1-\theta} \|\nabla v(\cdot, s)\|_{L^q(\Omega)} \right)
  \end{align*}
  for all $t \in (0, \tmax)$ and $\theta := 1 - \frac{q-r}{rq} \in (0, 1)$.

  Therefore, by Lemma~\ref{lm:l1_bdd} the function
  \begin{align*}
    L: (0, \tmax) \ra \R, \quad t \mapsto \sup_{s \in (0, t)} \|u(\cdot, s)\|_{L^\infty(\Omega)}
  \end{align*}
  fulfills
  \begin{align*}
    L(t) \le c_3 \left(1 +  L(t)^\theta \right) \quad \text{for all $t \in (0, \tmax)$}
  \end{align*}
  for some $c_3 \gt 0$.

  However, as $\theta \in (0, 1)$, this implies
  \begin{align*}
        \sup_{t \in (0, \tmax)} \|u(\cdot, t)\|_{L^\infty(\Omega)}
    &=  \sup_{t \in (0, \tmax)} L(t)
    \le C
  \end{align*}
  for some $C \gt 0$.
\end{proof}

Equipped with these estimates we are now able to prove our main theorem.
\begin{proof}[Proof of Theorem~\ref{th:main}]
  Existence of a uniquely determined solution as well as nonnegativity is guaranteed by Lemma~\ref{lm:ex_crit}.
  The extensibility criterion in Lemma~\ref{lm:ex_crit} in combination
  with Lemma~\ref{lm:bdd_nabla_v_w}, Lemma~\ref{lm:n1_bdd_w}, Lemma~\ref{lm:n1_bdd_vx} and Lemma~\ref{lm:bdd_u_l_infty} then implies
  $\tmax = \infty$ as well as $|u|, |w| \le C$ in $\Omega \times (0, \infty)$ for some $C \gt 0$.

  As then $f(u) w \le M$ in $\Omega \times (0, \infty)$ for some $M \gt 0$
  another (straightforward) application of smoothing estimates of the Neumann Laplace semigroup
  finally also yields boundedness of the set $\{(v(\cdot, t): t \in (0, \infty)\}$ in $W^{1, \infty}(\Omega)$.
\end{proof}

\sloppy \printbibliography
\end{document}